\newtheorem{theorem}{Theorem}[section]
\newtheorem{remark}[theorem]{Remark}
\newtheorem{definition}[theorem]{Definition}
\newtheorem{corollary}[theorem]{Corollary}
\DeclareMathOperator*{\SZ}{\mathcal{SZ}}
\DeclareMathOperator{\card}{card}
\begin{document}

\title{An undecidable case of lineability in \({\mathbb R}^{\mathbb R}\)}

\author{Jos\'{e} L.\ G\'{a}mez-Merino \and Juan B. Seoane-Sep\'{u}lveda}

\address{Plaza de Ciencias 3,\newline\indent  Facultad de Ciencias Matem\'{a}ticas,\newline\indent  Departamento de An\'{a}lisis Matem\'{a}tico,\newline \indent Universidad Complutense de Madrid,\newline\indent  Madrid 28040, Spain.}
\email{jlgamez@mat.ucm.es and jseoane@mat.ucm.es}

\thanks{The authors were supported by the Spanish Ministry of Science and Innovation, grant MTM2009-07848.}

\keywords{Lineability, spaceability, algebrability, almost disjoint family, Sierpi\'nski-Zygmund function, Erd\H{o}s-Rado Partition Theorem.}
\subjclass[2010]{03E50, 03E75, 15A03, 26A15.}

\begin{abstract}
Recently it has been proved that, assuming that there is an almost disjoint family of cardinality \(2^{\mathfrak c}\) in \(\mathfrak c\) (which is assured, for instance, by either Martin's Axiom, or CH, or even \mbox{$2^{<\mathfrak c}=\mathfrak c$}) one has that the set of Sierpi\'nski-Zygmund functions is \(2^{\mathfrak{c}}\)-strongly algebrable (and, thus, \(2^{\mathfrak{c}}\)-lineable). Here we prove that these two statements are actually equivalent and, moreover, they both are undecidable. This would be the first time in which one encounters an undecidable proposition in the recently coined theory of lineability and spaceability.
\end{abstract}

\maketitle

\section{Preliminaries and background}

As a consequence of the classic Luzin's Theorem we have that for every measurable function \(f\colon\mathbb R\to\mathbb R\), there is a measurable set \(S\subset\mathbb R\), of infinite measure, such that \(f\vert_S\) is continuous. A natural question would be whether similar results could be obtained for arbitrary functions (not necessarily measurable). In other words, given any arbitrary function \(f\colon\mathbb R\to\mathbb R\), can we find a ``large'' subset \(S\subset\mathbb R\) for which \(f\vert_S\) is continuous? In 1922, Blumberg \cite{B_1922} provided an affirmative answer to this question.

\begin{theorem}[Blumberg, 1922]
Let \(f\colon\mathbb R\to\mathbb R\) be an arbitrary function. There exists a dense subset \(S\subset\mathbb R\) such that the function \(f\vert_S\) is continuous.
\end{theorem}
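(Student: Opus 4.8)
The plan is to produce a \emph{countable} dense set $S$ on which $f$ is continuous, by iterating one Baire‑category device along a tree of intervals. The engine is a \emph{thinning lemma}: if $H\subseteq\mathbb R$ is dense and $\varepsilon>0$, then there is a dense $H'\subseteq H$ such that the oscillation of $f|_{H'}$ at each point of $H'$ is $<\varepsilon$. To see this, write $\mathbb R=\bigcup_{k\in\mathbb Z}f^{-1}[k\varepsilon/2,(k+1)\varepsilon/2)$; on any open interval, since an interval is a Baire space and this is a countable cover, one of the sets $H\cap f^{-1}[k\varepsilon/2,(k+1)\varepsilon/2)$ is dense in a subinterval. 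Hence one may choose a pairwise disjoint family $\{V_j\}$ of open intervals with $\bigcup_j V_j$ dense, together with integers $k_j$, so that $H\cap V_j\cap f^{-1}[k_j\varepsilon/2,(k_j+1)\varepsilon/2)$ is dense in $V_j$, and set $H'=\bigcup_j\bigl(H\cap V_j\cap f^{-1}[k_j\varepsilon/2,(k_j+1)\varepsilon/2)\bigr)$. Then $H'$ is dense, and disjointness of the $V_j$ gives $H'\cap V_j=H\cap V_j\cap f^{-1}[k_j\varepsilon/2,(k_j+1)\varepsilon/2)$, on which $f$ varies by less than $\varepsilon$, so $V_j$ witnesses the oscillation bound at each of its points.

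\emph{Superb points.} Call $x$ \emph{superb} if for every $m\ge1$ the set $f^{-1}(f(x)-\tfrac1m,f(x)+\tfrac1m)$ is dense in some neighbourhood of $x$. The non‑superb points form a meagre set: if $x$ is not superb at level $m$ and $f(x)\in[\tfrac km,\tfrac{k+1}m)$, then, since $f^{-1}[\tfrac km,\tfrac{k+1}m)\subseteq f^{-1}(f(x)-\tfrac1m,f(x)+\tfrac1m)$, we get $x\in Q\setminus\operatorname{int}\operatorname{cl}Q$ with $Q=f^{-1}[\tfrac km,\tfrac{k+1}m)$; and $Q\setminus\operatorname{int}\operatorname{cl}Q$ is always nowhere dense, being a subset of the boundary of the closed set $\operatorname{cl}Q$. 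So superb points are dense in every interval. Moreover each superb $x$ carries a chain $W_1(x)\supseteq W_2(x)\supseteq\cdots$ of open intervals with $\overline{W_{m+1}(x)}\subseteq W_m(x)$, $\operatorname{diam}W_m(x)<2^{-m}$, $\bigcap_m W_m(x)=\{x\}$, in which $f^{-1}(f(x)-\tfrac1m,f(x)+\tfrac1m)$ is dense.

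\emph{Assembly.} Build a forest whose nodes $x_s$, indexed by finite strings $s$, are all superb. Choose superb roots $x_{\langle k\rangle}$ so that $\{x_{\langle k\rangle}:k\in\mathbb N\}$ is dense and the intervals $W_1(x_{\langle k\rangle})$ are pairwise disjoint. Inductively, attach to a node $x_s$ a family of superb children, each assigned an ``annulus index'' $a\in\mathbb N$, so that the subtree below a child of index $a$ is confined to $\bigl(W_a(x_s)\setminus\overline{W_{a+1}(x_s)}\bigr)\cap f^{-1}(f(x_s)-\tfrac1a,f(x_s)+\tfrac1a)$ — feasible because $f^{-1}(f(x_s)\pm\tfrac1a)$ is dense in $W_a(x_s)$, hence in that annulus, and superb points are dense — while the home intervals of all children of $x_s$ are dense among the subintervals of $W_1(x_s)$ (several children per annulus). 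Put $S=\{x_s\}$: density of $S$ comes from the roots and from the density of the home intervals, and $f|_S$ is continuous, for at $x_s$, given $\varepsilon>0$, one picks $a_0>1/\varepsilon$ and $\delta>0$ with $(x_s-\delta,x_s+\delta)\subseteq W_{a_0}(x_s)$ small enough also to miss the finitely many ancestors of $x_s$ and every other root's window; then any $x_t\in S$ with $|x_t-x_s|<\delta$ lies in the subtree below a child of annulus index $a\ge a_0$ (a smaller annulus' subtree avoids $W_{a+1}(x_s)\supseteq W_{a_0}(x_s)$), whence $|f(x_t)-f(x_s)|<\tfrac1a\le\tfrac1{a_0}<\varepsilon$.

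The main obstacle is the inductive step in the assembly: a node deep in the tree must be superb \emph{and} lie simultaneously in the $f$‑preimage of the value window imposed by \emph{every} one of its ancestors, i.e. inside an intersection of finitely many sets each of which is only guaranteed dense (not comeagre) in the relevant interval, so the intersection need not be nonempty on its own. I would handle this by letting the value windows contract geometrically down each branch and placing every node's value strictly inside its parent's window, so that these windows nest and the constraint collapses to a single dense $f$‑preimage from which the thinning/superbness machinery produces the required point; arranging this contraction compatibly with the annulus separation that yields continuity is what forces the somewhat intricate indexing of the children, and is the technical heart of the proof.
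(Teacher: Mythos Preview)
The paper does not prove Blumberg's theorem at all; it is quoted as classical background with a reference to Blumberg's original article and to Kharazishvili's book. So there is no ``paper's own proof'' to compare against. That said, your sketch contains a real gap worth naming.

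Your iteration hinges on the following move: inside an annulus $A\subseteq W_a(x_s)$ the set $D=f^{-1}\bigl(f(x_s)-\tfrac1a,\,f(x_s)+\tfrac1a\bigr)$ is dense (this is what ``superb'' gives you), and superb points are comeagre, so you want a superb point in $D\cap A$. But a merely dense set can miss a comeagre set entirely (think $\mathbb Q$ and the irrationals), so this step does not go through. The difficulty you flag at the end---simultaneously meeting the value windows of all ancestors---is a symptom of the same problem, and your proposed fix (nesting the windows so the constraints collapse to a single dense preimage) still leaves you intersecting one dense set with one comeagre set. The thinning lemma in your first paragraph has the same defect: from $H$ dense and $\mathbb R=\bigcup_k f^{-1}[k\varepsilon/2,(k+1)\varepsilon/2)$ you cannot conclude that some $H\cap f^{-1}[k\varepsilon/2,(k+1)\varepsilon/2)$ is somewhere dense, because $H$ itself may be meagre.

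The standard repair is to strengthen ``superb'': require that for every $m$ the set $f^{-1}\bigl(f(x)-\tfrac1m,\,f(x)+\tfrac1m\bigr)$ be of \emph{second category in every neighbourhood of $x$}, not merely dense in one. The Banach category theorem then shows these points still form a comeagre set (for each $Q_k=f^{-1}[k/m,(k+1)/m)$ the union of open sets in which $Q_k$ is meagre meets $Q_k$ in a meagre set), and now the inductive step works: second category in $A$ intersected with comeagre in $A$ is again second category in $A$, hence nonempty and indeed somewhere dense, so the tree can be grown. With this change your outline becomes essentially the Bradford--Goffman style proof of Blumberg's theorem.
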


Blumberg's proof of his theorem (see, e.g., \cite{K_2006}*{p.\ 154}) shows that the set \(S\) above is countable. Of course, we could wonder whether we can choose the subset \(S\) in Blumberg's theorem to be uncountable. A (partial) negative answer was given in \cite{SZ_1923} by Sierpi\'nski and Zygmund.

\begin{theorem}[Sierpi\'{n}ski-Zygmund, 1923]\label{exa:SZ}
There exists a function \(f\colon\mathbb R\to\mathbb R\) such that, for any set \(Z\subset\mathbb R\) of cardinality the continuum, the restriction \(f\vert_Z\) is not a Borel map (and, in particular, not continuous.)
\end{theorem}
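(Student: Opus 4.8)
The plan is to build \(f\) by transfinite recursion of length \(\mathfrak c:=2^{\aleph_0}\), arranging that it disagrees with every Borel function off a small set. The engine is a cardinality count: the Borel \(\sigma\)-algebra of \(\mathbb R^2\) is generated in \(\omega_1\) steps from a countable base, so there are exactly \(\mathfrak c\) Borel subsets of \(\mathbb R^2\); identifying a function with its graph, there are therefore exactly \(\mathfrak c\) Borel functions \(\mathbb R\to\mathbb R\). Fix an enumeration \(\{g_\alpha:\alpha<\mathfrak c\}\) of all of them and an enumeration \(\{x_\alpha:\alpha<\mathfrak c\}\) of \(\mathbb R\).

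Next I would run the recursion. Having chosen \(f(x_\gamma)\) for all \(\gamma<\alpha\), note that the set \(F_\alpha:=\{g_\beta(x_\alpha):\beta\le\alpha\}\) has cardinality at most \(|\alpha|+1<\mathfrak c=\card(\mathbb R)\), so one may pick \(f(x_\alpha)\in\mathbb R\setminus F_\alpha\). This defines \(f\) on all of \(\mathbb R\), and by construction, for every \(\beta<\mathfrak c\) and every \(\alpha\ge\beta\) we have \(f(x_\alpha)\neq g_\beta(x_\alpha)\); in other words, \(f\) agrees with each \(g_\beta\) on a set of size \(<\mathfrak c\).

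To finish, suppose toward a contradiction that \(Z\subseteq\mathbb R\) has cardinality \(\mathfrak c\) and \(f\vert_Z\) is Borel (relative to the subspace \(Z\)). By a classical extension theorem of Kuratowski — a Borel map (equivalently, a map of some Baire class) defined on an arbitrary separable metric subspace extends to a Borel map on a \(G_\delta\) superset — \(f\vert_Z\) extends to a Borel function on a Borel set \(B\supseteq Z\); setting it equal to \(0\) on \(\mathbb R\setminus B\) gives a total Borel function \(g\colon\mathbb R\to\mathbb R\) with \(g\vert_Z=f\vert_Z\). Then \(g=g_\beta\) for some \(\beta<\mathfrak c\). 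Since \(\{x_\gamma:\gamma<\beta\}\) has cardinality \(<\mathfrak c\) while \(\card(Z)=\mathfrak c\), there is \(x_\alpha\in Z\) with \(\alpha\ge\beta\), whence \(f(x_\alpha)\neq g_\beta(x_\alpha)=g(x_\alpha)=f(x_\alpha)\), a contradiction. As continuous functions are Borel, the same \(f\) witnesses the parenthetical clause.

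The transfinite recursion itself is routine; the real content lies in two places. First, the count that there are only \(\mathfrak c\) Borel functions, which rests on the Borel hierarchy closing off at stage \(\omega_1\) with each level built by countable operations from a countable base. Second — and this is the step I expect to be the main obstacle to state cleanly — the extension theorem invoked in the verification, which is exactly what lets a Borel function on a possibly non-Borel set \(Z\) be captured by one of the \(\mathfrak c\) enumerated \emph{total} Borel functions. If one only wishes to rule out continuous restrictions, this can be replaced by the elementary fact that a continuous function on a metric subspace extends continuously to a \(G_\delta\) superset, together with the easy bound that there are \(\mathfrak c\) continuous functions on \(G_\delta\) subsets of \(\mathbb R\).
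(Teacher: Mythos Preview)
The paper does not prove this statement: Theorem~\ref{exa:SZ} is quoted as a classical result of Sierpi\'nski and Zygmund with a citation to their 1923 paper, and no argument is given. So there is no ``paper's own proof'' to compare against.

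That said, your construction is correct and is essentially the classical one. The two substantive ingredients you isolate are exactly right: the count \(\card\{\text{Borel functions }\mathbb R\to\mathbb R\}=\mathfrak c\), and the extension step that turns a Borel map on an arbitrary \(Z\subseteq\mathbb R\) into a total Borel map agreeing on \(Z\). A minor remark on presentation: many expositions sidestep the extension theorem by enumerating from the outset all Borel (or merely continuous) partial functions with \(G_\delta\) domain---there are still only \(\mathfrak c\) of them---and diagonalizing against those directly; your variant, enumerating only total Borel functions and invoking Kuratowski's extension theorem at the verification stage, is equivalent and perfectly clean. Your final paragraph already flags the easier route if one only wants to block continuous restrictions, which matches the usual textbook treatment.
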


From now on, we shall say that a function \(f\colon\mathbb R\to\mathbb R\) is a \emph{Sierpi\'nski-Zygmund function} if~it~satisfies the condition in Sierpi\'nski-Zygmund's Theorem, and we denote
    \[
    \SZ=\{\,f\colon\mathbb R\to\mathbb R \,:\, f\text{ is a Sierpi\'nski-Zygmund function}\,\}.
    \]

Before continuing, let us recall some recent well known concepts that shall be useful in what follows:

\begin{definition}
Given a certain property we say that the subset $M$ of a topological vector space $X$ which satisfies it is \emph{$\mu$-lineable} if $M \cup \{0\}$ contains a vector space of dimension $\mu$ (finite or infinite cardinality). Also, if we let $\mathcal{L}$ be an algebra, we say that a set $A \subset \mathcal{L}$ is $\beta$-algebrable if there exists an algebra $\mathcal{B}$ so that $\mathcal{B} \subset A \cup \{ 0\}$ and card$(Z) = \beta$, where  $\beta$ is a cardinal number and $Z$ is a minimal system of generators of $\mathcal{B}$. We say that a subset $E$ of a commutative linear algebra $B$ is {\em strongly $\kappa$-algebrable} if there exists a $\kappa$-generated free algebra $A$ contained in $E \cup \{0\}$.
\end{definition}

We refer the interested reader to \cite{AGS_2005,APS_2006,AS_2007,BGPS_PAMS,BFPS_2012,EGS_TAMS,GMSS_2010,GMS_2010,GGMS_2010,S_PhD_thesis} for recent developments on the previous concepts, where many examples are given and techniques are developed. Next, and coming back to the class $\SZ$, let us recall some known results about this class.

\begin{enumerate}
\item It is known that if the Continuum Hypothesis (CH) holds then the restriction of a function in \(\SZ\) to any uncountable set can not be continuous (see, e.g., \cite{K_2006}*{pp.\ 165,\,\nolinebreak 166}).
\item Also, CH is necessary in this frame. Shinoda proved in 1973 \cite{S_1973} that if Martin's Axiom and the negation of CH hold then, for every \(f\colon\mathbb R\to\mathbb R\), there exists an uncountable set \(Z\subset\mathbb R\) such that \(f\vert_Z\) is continuous.
\item The functions in $\SZ$ are never measurable and, although it is possible to construct them being injective, they are nowhere monotone in a very strong way. (Their restriction to any set of cardinality $\mathfrak{c}$ is not monotone.)
\item In 1997, Balcerzak, Ciesielski, and Natkaniec showed in \cite{BCN_1997} that, assuming the set-theoretical condition \(\operatorname{cov}(\mathcal M)=\mathfrak c\) (which is true under Martin's Axiom or CH), there exists a Darboux function that is in $\SZ$ as well. They prove also that there exists a model of ZFC in which there are no such functions (see, also, \cite{P_2002,CN_1997,CN_1997_2}).
\item Later, G\'{a}mez-Merino, Mu\~{n}oz-Fern\'{a}ndez, S\'{a}nchez, and Seoane-Sep\'{u}lveda (2010) proved in \cite{GMSS_2010}*{Theorems 5.6 and 5.10} that the set $\SZ$ is $\mathfrak{c}^+$-lineable and, also, $\mathfrak{c}$-algebrable. As a consequence, assuming that $\mathfrak{c}^+ = 2^{\mathfrak{c}}$ (which follows, for instance, from the Generalized Continuum Hypothesis or GCH), \(\SZ\) would be  \(2^\mathfrak{c}\)-lineable.
\item Also, in 2010 \cite[Corollary 2.11]{GMS_2010}, G\'{a}mez-Merino, Mu\~{n}oz-Fern\'{a}ndez, and Seoane-Sep\'{u}lveda proved that $\SZ$ is actually \(d_{\mathfrak c}\)-lineable, where \(d_{\mathfrak c}\) is a cardinal invariant defined as
    \[
    d_{\mathfrak c}=\min\{\,\card F\,:\,F\subset\mathbb R^{\mathbb R},(\forall \varphi\in\mathbb R^{\mathbb R})(\exists f\in F)(\card(f\cap \varphi)=\mathfrak c)\,\}.
    \]
This cardinal can take as value any regular cardinal between \(\mathfrak c^+\) and \(2^{\mathfrak c}\), depending of the set-theoretical axioms assumed.

\item Later, in \cite[Theorem 2.6]{BGPS_PAMS}, Bartoszewicz, G\l \c ab, Pellegrino, and Seoane-Sep\'{u}lveda showed that \(\SZ\) is actually  \(\kappa\)-strongly algebrable for some $\mathfrak{c}^+ \le \kappa \le 2^{\mathfrak{c}}$ if there is in \(\mathfrak c\) an almost disjoint family of cardinality \(\kappa\) (see Definition \ref{almostdisjoint} below). Assuming either Martin's Axiom, or~CH, or $2^{<\mathfrak c}=\mathfrak c$, this \(\kappa\) can be chosen to be \(2^{\mathfrak c}\), so we would have that $\SZ$ is \(2^{\mathfrak{c}}\)-strongly algebrable.

\end{enumerate}

So far, and as we can see in the previous background, a lot of effort has been invested in trying to achieve the $2^{\mathfrak c}$-lineability (maximal lineability) of $\SZ$ without the need of any additional set theoretical assumptions and, still, the problem remains open. In this note we shall prove that it is, actually, undecidable and, in order to obtain this result we first need the study the notion of almost disjoint families, which is the topic of the next section.

\section{The relation between Sierpi\'ski-Zygmund functions and almost disjoint families}

The following is a well known concept in Set Theory (see, e.g. \cite{EHM_1968,K_1983}).

\begin{definition}\label{almostdisjoint}
Let \(S\) a subset of cardinality \(\kappa\). We say that a family \(\mathcal F\subset\mathcal P(S)\) is an \emph{almost disjoint family} in \(S\) if the following conditions hold:
\begin{enumerate}
\item If \(A\in\mathcal F\) then \(\card A=\kappa\).
\item If \(A,B\in\mathcal F\), \(A\neq B\), then \(\card(A\cap B)<\kappa\).
\end{enumerate}
\end{definition}

As we already mentioned earlier, it is still not known whether any additional set-theoretical assumptions are needed or not in order to show the $2^\mathfrak{c}$-strongly algebrability (and the $2^\mathfrak{c}$-lineability) of $\SZ$. Our next result shows that solving this question is equivalent to the existence,  in \(\mathfrak c\), of an almost disjoint family of cardinality \(2^{\mathfrak c}\).

\begin{theorem}
Let \(\kappa\) be a cardinal number such that $\mathfrak c \le \kappa \le 2^{\mathfrak{c}}$. The following are equivalent:
\begin{enumerate}
\item\label{item_str_alg} $\SZ$ is \(\kappa\)-strongly algebrable.
\item\label{item_alg} $\SZ$ is \(\kappa\)-algebrable.
\item\label{item_lin} \(\SZ\) is \(\kappa\)-lineable.
\item\label{al_disj} There exists in \(\mathfrak c\) an almost disjoint family of cardinality \(\kappa\).
\end{enumerate}
\end{theorem}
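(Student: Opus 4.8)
The plan is to prove the cycle of implications $(\ref{item_str_alg})\Rightarrow(\ref{item_alg})\Rightarrow(\ref{item_lin})\Rightarrow(\ref{al_disj})\Rightarrow(\ref{item_str_alg})$, so that all four become equivalent. The implications $(\ref{item_str_alg})\Rightarrow(\ref{item_alg})\Rightarrow(\ref{item_lin})$ are immediate from the definitions: a $\kappa$-generated free algebra is in particular a $\kappa$-generated algebra, whose minimal generating set has cardinality $\kappa$, and any algebra of $\kappa$ generators contains a vector space of dimension $\kappa$ (for instance the linear span of the generators, which is linearly independent). The implication $(\ref{al_disj})\Rightarrow(\ref{item_str_alg})$ is exactly the content of \cite[Theorem 2.6]{BGPS_PAMS} recalled in item (7) of the background: an almost disjoint family in $\mathfrak c$ of cardinality $\kappa$ yields that $\SZ$ is $\kappa$-strongly algebrable. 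So the whole theorem reduces to the single implication $(\ref{item_lin})\Rightarrow(\ref{al_disj})$.

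For $(\ref{item_lin})\Rightarrow(\ref{al_disj})$, suppose $\SZ$ is $\kappa$-lineable, so there is a vector space $V\subset\SZ\cup\{0\}$ with $\dim V=\kappa$; fix a linearly independent family $\{f_\alpha:\alpha<\kappa\}\subset V$. The idea is to extract an almost disjoint family in (a set of size) $\mathfrak c$ from the graphs of these functions. Enumerate $\mathbb R=\{x_\xi:\xi<\mathfrak c\}$ and, using a bijection $\mathbb R\times\mathbb R\to\mathfrak c$, regard each graph $\Gamma_\alpha=\{(x,f_\alpha(x)):x\in\mathbb R\}$ as a subset $A_\alpha$ of $\mathfrak c$ of cardinality $\mathfrak c$. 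I claim $\{A_\alpha:\alpha<\kappa\}$ is almost disjoint: if $\alpha\neq\beta$ and $\card(\Gamma_\alpha\cap\Gamma_\beta)=\mathfrak c$, then the set $Z=\{x:f_\alpha(x)=f_\beta(x)\}$ has cardinality $\mathfrak c$, and on $Z$ the function $(f_\alpha-f_\beta)\vert_Z$ is identically $0$, hence continuous (indeed Borel). Since $f_\alpha-f_\beta\in V\setminus\{0\}\subset\SZ$ by linear independence, this contradicts the defining property of Sierpi\'nski-Zygmund functions. Therefore $\card(A_\alpha\cap A_\beta)<\mathfrak c$ for all $\alpha\neq\beta$, and we have produced an almost disjoint family of cardinality $\kappa$ in a set of size $\mathfrak c$.

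The one genuine subtlety, and the step I expect to need the most care, is matching cardinalities: the hypothesis $\mathfrak c\le\kappa\le 2^{\mathfrak c}$ guarantees a linearly independent family of size exactly $\kappa$, but one must check that distinct $f_\alpha$ really give "essentially distinct" sets $A_\alpha$ and that no collapsing occurs — this is where linear independence (not just distinctness) of the $f_\alpha$ is essential, since it is the difference $f_\alpha-f_\beta$ that must lie in $\SZ$. It is worth noting that the argument in fact shows something slightly stronger: any $\kappa$-dimensional subspace of $\SZ\cup\{0\}$ is already "free enough" at the level of graphs to yield an almost disjoint family, which is why $\kappa$-lineability — a priori much weaker than $\kappa$-strong algebrability — already suffices to close the loop. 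Finally, one should remark that the lower bound $\mathfrak c\le\kappa$ is used only to keep the family honest (an almost disjoint family in $\mathfrak c$ is required to consist of sets of size $\mathfrak c$), and the upper bound $\kappa\le 2^{\mathfrak c}$ is automatic since $\mathcal P(\mathfrak c)$ has that cardinality.
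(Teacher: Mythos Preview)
Your proof is correct and follows essentially the same route as the paper: the cycle $(\ref{item_str_alg})\Rightarrow(\ref{item_alg})\Rightarrow(\ref{item_lin})\Rightarrow(\ref{al_disj})\Rightarrow(\ref{item_str_alg})$, with the first two arrows trivial, the last one outsourced to \cite[Theorem~2.6]{BGPS_PAMS}, and the substantive step $(\ref{item_lin})\Rightarrow(\ref{al_disj})$ obtained by observing that graphs of distinct elements of a subspace $V\subset\SZ\cup\{0\}$ meet in fewer than $\mathfrak c$ points (else their difference would be a nonzero element of $\SZ$ restricting to a continuous function on a set of size $\mathfrak c$). The only cosmetic difference is that the paper takes the whole space $V$ (which has cardinality $\kappa$ since $\kappa\ge\mathfrak c$) as the almost disjoint family in $\mathbb R^2$, whereas you pick out a basis $\{f_\alpha:\alpha<\kappa\}$; either choice works, and your concern about ``collapsing'' is unnecessary since distinct functions already have distinct graphs and the almost-disjointness argument only uses $f_\alpha-f_\beta\neq 0$.
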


\begin{proof}{\ }\par
(\ref{item_str_alg}\(\Rightarrow\)\ref{item_alg}).\quad Obvious.

(\ref{item_alg}\(\Rightarrow\)\ref{item_lin}).\quad Obvious.

(\ref{item_lin}\(\Rightarrow\)\ref{al_disj}).\quad Let us assume that \(V\subset\SZ\cup\{0\}\) is a \(\kappa\)-dimensional vector space. To prove that there exists in \(\mathfrak c\) an almost disjoint family of cardinality \(\kappa\), it will suffice to find a family like this in some set of cardinal~\(\mathfrak c\). We shall prove that \(V\) (whose cardinality is \(\kappa\)) is an almost disjoint family in~\(\mathbb R^2\).

If \(f\in V\) it is quite obvious that \(\card f=\mathfrak c\), so the first condition in the definition is accomplished.

To prove the second condition, let \(f,g\in V\), \(f\neq g\). Then we can not have \(\card(f\cap g)=\mathfrak c\), because in that case \(f-g=0\) in a set of cardinality \(\mathfrak c\), and therefore the restriction of \(f-g\) to that set would be continuous. This is impossible, because \(f-g\in V\setminus\{0\}\subset\SZ\). So, we must have \(\card(f\cap g)<\mathfrak c\).

(\ref{al_disj}\(\Rightarrow\)\ref{item_str_alg}).\quad This is just \cite[Theorem 2.6]{BGPS_PAMS}.
\end{proof}

Although we have not yet solved the problem of the $2^\mathfrak{c}$-lineability of the set $\SZ$, the next section shall give the ultimate answer to this open question thanks to the previous theorem and some set theoretical techniques.

\section{The size of an almost disjoint family in \(\mathfrak c\)}

Let us, next, review a series of results on almost disjoint families, all of which can be found in \cite{K_1983}.

\begin{remark}\
\begin{enumerate}
\item On the one hand, recall that under ZFC there is an almost disjoint family of cardinality \(\mathfrak c=2^{\aleph_0}\) in \(\aleph_0\).
\item\label{enu:alephsubone} On the other, the existence of an almost disjoint family of cardinality \(2^{\aleph_1}\) in \(\aleph_1\) is undecidable.
\item\label{enu:consistent} Also, and under the set-theoretical assumption \(2^{<\mathfrak c}=\mathfrak c\), there exists an almost disjoint family of cardinality \(2^{\mathfrak c}\) in \(\mathfrak c\).
\end{enumerate}
\end{remark}

Let us point out that (\ref{enu:consistent}) says that is consistent with ZFC that \(\mathcal SZ\cup\{0\}\) contains a vector space of dimension \(2^{\mathfrak c}\). We shall see in the following that the contrary is also consistent. The proof follows, roughly, the lines of that of (\ref{enu:alephsubone}) (see \cite{K_1983}*{p.~290 (B4)}).

\begin{theorem}
In some model of ZFC there is no almost disjoint family in \(\mathfrak c\) whose cardinality is~\(2^{\mathfrak c}\).
\end{theorem}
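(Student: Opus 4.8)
The plan is to force a model of ZFC in which $\mathfrak c$ is large but no almost disjoint family in $\mathfrak c$ attains cardinality $2^{\mathfrak c}$. The natural template, exactly as the authors hint, is the construction in Kunen showing that the existence of an a.d.\ family of size $2^{\aleph_1}$ in $\aleph_1$ is independent: one starts from a model of GCH, chooses a cardinal $\kappa$ with $\kappa=\kappa^{<\kappa}$ (so that the relevant forcing is well-behaved), and adds many Cohen subsets so that in the extension $2^{\mathfrak c}$ becomes very large while the combinatorial structure of subsets of $\mathfrak c$ is controlled. The key invariant to track is the cardinality of a maximal a.d.\ family on $\mathfrak c$: an a.d.\ family $\mathcal F\subset\mathcal P(\mathfrak c)$ has every element of size $\mathfrak c$ and pairwise intersections of size $<\mathfrak c$, and we want to make such an $\mathcal F$ have size at most some $\lambda<2^{\mathfrak c}$.

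First I would fix the ground model $V\models \mathrm{GCH}$ and let $\mu=\mathfrak c$ in the final model be, say, $\aleph_1$ (or any regular $\mu$ with $\mu^{<\mu}=\mu$), obtained by adding $\aleph_1$ Cohen reals if one wants $\mathfrak c=\aleph_1$, or more generally by keeping the ground-model value of $\mathfrak c$. Then I would force with $\mathrm{Fn}(2^{\mu^+}\times\mu,\,2)$ — i.e., add $2^{\mu^+}$ many Cohen subsets of $\mu$ — no wait, the cleaner route is: force to blow up $2^{\mathfrak c}$ using a forcing of size $\nu$ (for $\nu$ huge) that adds subsets of $\mathfrak c$, while arranging, via a counting/nice-names argument together with the chain condition, that any a.d.\ family on $\mathfrak c$ in the extension already "lives" inside an intermediate model of size $<\nu$, and hence has cardinality at most $\nu_0<2^{\mathfrak c}$. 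Concretely: take the finite-support product $\mathbb P=\prod_{i<\nu}\mathbb Q_i$ where each $\mathbb Q_i$ adds a subset of $\mathfrak c$ and $\nu$ is chosen with $\mathrm{cf}(\nu)>\mathfrak c$ so that $\mathbb P$ has the $\mathfrak c^+$-c.c.; then $2^{\mathfrak c}=\nu^{\mathfrak c}$ in the extension, which by GCH-in-$V$ and the chain condition is exactly $\nu$ (for suitable $\nu$, e.g.\ $\nu=\aleph_{\omega+1}$ or any $\nu$ with $\mathrm{cf}(\nu)>\mathfrak c$ and $\nu^{\mathfrak c}=\nu$), so $2^{\mathfrak c}=\nu$.

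The heart of the argument is then the following: suppose $\dot{\mathcal F}$ is a $\mathbb P$-name for an a.d.\ family on $\mathfrak c$ of size $\lambda$. Each member $\dot A_\alpha$ is a name for a subset of $\mathfrak c$; by the $\mathfrak c^+$-c.c., $\dot A_\alpha$ depends on only $\le\mathfrak c$ coordinates, so there is a set $S_\alpha\in[\nu]^{\le\mathfrak c}$ with $\dot A_\alpha$ a $\mathbb P{\restriction}S_\alpha$-name. If $\lambda>\mathfrak c$ we can apply a $\Delta$-system lemma (valid because $\mathrm{cf}(\nu)>\mathfrak c$) to the $S_\alpha$'s to get a root $R$ and $\lambda$ indices with $S_\alpha\cap S_\beta=R$; then for distinct such $\alpha,\beta$ the names $\dot A_\alpha,\dot A_\beta$ are "independent over $\mathbb P{\restriction}R$", and a density/mutual-genericity computation shows that generically $\card(\dot A_\alpha\cap\dot A_\beta)=\mathfrak c$ — contradicting almost disjointness. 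Hence $\lambda\le\mathfrak c<2^{\mathfrak c}=\nu$, i.e.\ there is no a.d.\ family on $\mathfrak c$ of cardinality $2^{\mathfrak c}$ (in fact none of size $>\mathfrak c$), which is even stronger than claimed.

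I expect the main obstacle to be the genericity computation in the last step: making precise the sense in which two Cohen-generic subsets of $\mathfrak c$ (added by disjoint coordinate blocks, sharing only the common part over the root $R$) have intersection of full size $\mathfrak c$, and verifying that the relevant density arguments survive the finite-support product structure and the $\Delta$-system thinning. One must be careful that each $\mathbb Q_i$ adds a subset of $\mathfrak c$ that is "generic enough" — e.g.\ using $\mathrm{Fn}(\mathfrak c,2)$ on each coordinate so that the generic subset meets every ground-model (indeed every intermediate-model) subset of $\mathfrak c$ of size $\mathfrak c$ in a set of size $\mathfrak c$ — and that the root forcing $\mathbb P{\restriction}R$, being of size $\le\mathfrak c$, cannot by itself code an a.d.\ family of size $>\mathfrak c$ on $\mathfrak c$ either, which reduces to the ground model where GCH makes this transparent. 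Assembling these pieces and checking the cardinal arithmetic ($2^{\mathfrak c}=\nu$ with $\nu$ of cofinality $>\mathfrak c$) is routine but must be done carefully; everything else follows the Kunen blueprint for (B4) cited in the excerpt.
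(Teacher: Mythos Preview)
The core step of your argument is wrong. After the $\Delta$-system reduction you have names $\dot A_\alpha$ with supports $S_\alpha$ satisfying $S_\alpha\cap S_\beta=R$, and you assert that ``a density/mutual-genericity computation shows that generically $\card(\dot A_\alpha\cap\dot A_\beta)=\mathfrak c$''. But the $\dot A_\alpha$ are names for \emph{arbitrary} members of the purported almost disjoint family, not for the Cohen-generic subsets the forcing adds; that $S_\alpha\setminus R$ and $S_\beta\setminus R$ are disjoint tells you only that $A_\alpha$ and $A_\beta$ lie in mutually generic extensions of $V[G{\restriction}R]$, which says nothing about $\card(A_\alpha\cap A_\beta)$. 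Indeed, nothing prevents every $\dot A_\alpha$ from being a check-name for a ground-model set (support $\varnothing$), and in your GCH ground model with $\mathfrak c=\mu$ regular one has $\mu^{<\mu}=\mu$, so there \emph{is} an almost disjoint family on $\mu$ of size $2^\mu=\mu^+$ (branches of $2^{<\mu}$). This family survives into the extension, directly refuting your announced conclusion that no a.d.\ family on $\mathfrak c$ has size $>\mathfrak c$. You have conflated the generic subsets $G_i$ (which do meet every intermediate-model set of size $\mathfrak c$ in a set of size $\mathfrak c$) with the members $A_\alpha$ of the family.

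The paper's argument---which \emph{is} the Kunen (B4) blueprint you invoke---uses neither a $\Delta$-system nor mutual genericity. Over a GCH ground model it forces (via Easton) $\mathfrak c=\aleph_2$ and $2^{\mathfrak c}=\aleph_4$, uses the $\aleph_2$-cc to obtain for each pair $\alpha<\beta<\omega_4$ a ground-model ordinal $\gamma_{\alpha,\beta}<\omega_2$ with $p\Vdash\dot E_\alpha\cap\dot E_\beta\subset\gamma_{\alpha,\beta}$, and then applies the Erd\H{o}s--Rado partition relation $(2^{\aleph_2})^+\to(\aleph_3)^2_{\aleph_2}$ (equivalently $\aleph_4\to(\aleph_3)^2_{\aleph_2}$ under GCH) to this $\omega_2$-coloring of $[\omega_4]^2$. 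This yields $H$ of size $\aleph_3$ and a single $\gamma<\omega_2$ with $E_\alpha\cap E_\beta\subset\gamma$ for all distinct $\alpha,\beta\in H$; the sets $E_\alpha\setminus\gamma$ are then $\aleph_3$ pairwise disjoint nonempty subsets of $\omega_2$, a contradiction. Erd\H{o}s--Rado is the indispensable combinatorial ingredient, and it is precisely what your proposal is missing.
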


\begin{proof}
Let us take a model \(M\) of \(ZFC+GCH\) as ground model. Let \(\mathbb P\in M\) be an Easton forcing obtained from an index function \(E(\aleph_0)=\aleph_2\), \(E(\aleph_1)=\aleph_4\); see \cite{K_1983}*{Ch.~VIII, \S4}. (This forcing is equivalent to the iteration of \(\operatorname{Fn}(\aleph_4\times\aleph_1,2,\aleph_1)\) and \(\operatorname{Fn}(\aleph_2\times\aleph_0,2,\aleph_0)\) (\cite{K_1983}*{Lemma~VIII~4.3})). 

Let \(G\) be a generic filter for \(\mathbb P\). In the generic extension \(M[G]\) we have (\cite{K_1983}*{Theorem~VII~4.7}) that \(\mathfrak c=2^{\aleph_0}=\aleph_2\), \(2^{\aleph_1}=\aleph_4\), and also \(2^{\mathfrak c}=2^{\aleph_2}=\aleph_4\). We shall see that in this generic extension, there is no almost disjoint family of cardinality \(\aleph_4\) in~\(\omega_2\). Indeed, suppose that some \(p\in\mathbb P\) forces the existence of a family of \(\aleph_4\) almost disjoint subsets of \(\omega_2\). Then there would be \(\mathbb P\)-names \(\dot E_\alpha\) for \(\alpha<\omega_4\) such that \(p\) forces that each \(\dot E_\alpha\subset\omega_2\) and that \(\card(\dot E_\alpha\cap\dot E_\beta)<\aleph_2\), whenever \(\alpha<\beta\). By \cite{K_1983}*{Lemma~VIII~4.4}, \(\mathbb P\) has the \(\aleph_2\)-cc. Therefore, using \cite{K_1983}*{Lemma~VIII~5.6}, whenever \(\alpha<\beta\), there is a \(\gamma_{\alpha,\beta}<\omega_2\) such that \(p\)\/ forces that \(\dot E_\alpha\cap \dot E_\beta\subset\gamma_{\alpha,\beta}\). 

Next, using the \((2^{\aleph_2})^+\to(\aleph_3)_{\aleph_2}^2\) instance of the Erd\H{o}s-Rado Partition Theorem (see \cite{K_1983}*{p.~290 (B1)}), which is equivalent to \(\aleph_4\to(\aleph_3)_{\aleph_2}^2\) because GCH holds in \(M\), we have that there exist a subset \(H\subset\omega_2\) such that \(\card H=\aleph_3\) and \(\gamma<\omega_2\) such that \(p\) forces that \(\dot E_\alpha\cap\dot E_\beta\subset\gamma\) whenever \(\alpha,\beta\in H\), \(\alpha<\beta\). If we define \(F_\alpha=\dot E_\alpha\setminus\gamma\) for every \(\alpha\in H\) we have:
\begin{enumerate}
\item For every \(\alpha\in H\), \(p\) forces that \(F_\alpha\subset\omega_2\).
\item For every \(\alpha\in H\), \(p\) forces that \(F_\alpha\neq\varnothing\), because \(\card E_\alpha=\aleph_2\) and \(\card \gamma<\aleph_2\).
\item If \(\alpha,\beta\in H\), \(\alpha<\beta\), then \(p\) forces that \(F_\alpha\cap F_\beta=\varnothing\).
\end{enumerate}
Thus, we get a contradiction, because in \(M[G]\) the family \(\{F_\alpha\}_{\alpha\in H}\) is a \emph{pairwise disjoint} family of \(\aleph_3\) many elements in \(\omega_2\).
\end{proof}

Hence, we obtain what the title of this note states:

\begin{corollary}
The $2^{\mathfrak{c}}$-lineability (maximal lineability) of the set of Sierpi\'{n}ski-Zygmund functions is undecidable.
\end{corollary}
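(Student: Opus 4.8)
The plan is to establish that the proposition ``$\SZ$ is $2^{\mathfrak c}$-lineable'' is independent of ZFC, i.e.\ that neither it nor its negation is a theorem of ZFC; equivalently, granting that ZFC is consistent, that each of the two holds in some model of ZFC. Both halves will be read off from the two main results already at our disposal: the equivalence theorem of Section~2 and the two consistency statements of Section~3.

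First I would handle the positive side. Apply the equivalence theorem with $\kappa=2^{\mathfrak c}$; this is legitimate because trivially $\mathfrak c\le 2^{\mathfrak c}\le 2^{\mathfrak c}$, so $\kappa$ lies in the admissible range. We thus obtain that $\SZ$ is $2^{\mathfrak c}$-lineable if and only if there is in $\mathfrak c$ an almost disjoint family of cardinality $2^{\mathfrak c}$. By the remark recorded at the beginning of Section~3, such a family does exist under the additional set-theoretic hypothesis $2^{<\mathfrak c}=\mathfrak c$. Since this hypothesis is itself consistent with ZFC---it follows, for instance, from CH, or from GCH, which holds in the constructible universe $L$---there is a model of ZFC in which $\SZ$ is $2^{\mathfrak c}$-lineable (indeed, $2^{\mathfrak c}$-strongly algebrable).

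Next I would use the forcing model produced in the theorem immediately preceding this corollary: a model $M[G]$ of ZFC in which there is no almost disjoint family of cardinality $2^{\mathfrak c}$ in $\mathfrak c$. Feeding this into the equivalence theorem---concretely, into the contrapositive of the implication ``almost disjoint family of size $\kappa$ $\Rightarrow$ $\kappa$-lineability'', or equally of ``$\kappa$-lineability $\Rightarrow$ almost disjoint family of size $\kappa$'', taken at $\kappa=2^{\mathfrak c}$---we conclude that in $M[G]$ the set $\SZ$ fails to be $2^{\mathfrak c}$-lineable. Hence the negation of our proposition is consistent with ZFC as well.

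Putting the two paragraphs together, the statement ``$\SZ$ is $2^{\mathfrak c}$-lineable'' holds in one model of ZFC and fails in another, so it is undecidable; and since $\dim(\mathbb R^{\mathbb R})=2^{\mathfrak c}$, this is precisely the undecidability of the maximal lineability of $\SZ$. The genuine mathematical labour---the Easton-forcing construction together with the Erd\H{o}s-Rado partition argument, and the characterisation of the lineability of $\SZ$ via almost disjoint families---is already behind us; the only points that call for a moment's care in the present deduction are meta-mathematical, namely that $2^{<\mathfrak c}=\mathfrak c$ is a consistent extension of ZFC and that $\kappa=2^{\mathfrak c}$ lies in the range to which the equivalence theorem applies, both of which are immediate.
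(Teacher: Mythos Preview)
Your argument is correct and is exactly the deduction the paper intends: combine the equivalence theorem at $\kappa=2^{\mathfrak c}$ with the consistency of $2^{<\mathfrak c}=\mathfrak c$ on one side and the Easton-forcing model of the preceding theorem on the other. The paper leaves this final step implicit, and your write-up simply spells it out.
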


\begin{bibdiv}
\begin{biblist}

\bib{AGS_2005}{article}{
   author={Aron, R. M.},
   author={Gurariy, V. I.},
   author={Seoane-Sep\'{u}lveda, J. B.},
   title={Lineability and spaceability of sets of functions on $\Bbb R$},
   journal={Proc. Amer. Math. Soc.},
   volume={133},
   date={2005},
   number={3},
   pages={795--803},
   issn={0002-9939},
}

\bib{APS_2006}{article}{
    AUTHOR = {Aron, R. M.}
    author={P\'{e}rez-Garc{\'{\i}}a, D.},
    author={Seoane-Sep\'{u}lveda, J. B.},
     TITLE = {Algebrability of the set of non-convergent {F}ourier series},
   JOURNAL = {Studia Math.},
  FJOURNAL = {Studia Mathematica},
    VOLUME = {175},
      YEAR = {2006},
    NUMBER = {1},
     PAGES = {83--90},
}
	
\bib{AS_2007}{article}{
   author={Aron, R. M.},
   author={Seoane-Sep\'{u}lveda, J. B.},
   title={Algebrability of the set of everywhere surjective functions on $\Bbb C$},
   journal={Bull. Belg. Math. Soc. Simon Stevin},
   volume={14},
   date={2007},
   number={1},
   pages={25--31},
   issn={1370-1444},
}

\bib{BCN_1997}{article}{
   author={Balcerzak, Marek},
   author={Ciesielski, Krzysztof},
   author={Natkaniec, Tomasz},
   title={Sierpi\'nski-Zygmund functions that are Darboux, almost
   continuous, or have a perfect road},
   journal={Arch. Math. Logic},
   volume={37},
   date={1997},
   number={1},
   pages={29--35},
}

\bib{BGPS_PAMS}{article}{
   author={Bartoszewicz, A.},
   author={G\l \c ab, S.},
   author={Pellegrino, D.},
   author={Seoane-Sep\'{u}lveda, J. B.},
   title={Algebrability, non-linear properties, and special functions},
   journal={Proc. Amer. Math. Soc.},
   status={In Press},
}

\bib{B_1922}{article}{
   author={Blumberg, H.},
   title={New properties of all real functions},
   journal={Trans. Amer. Math. Soc.},
   volume={82},
   date={1922},
   pages={53--61},
   isbn={3-540-16474-X},
}

\bib{BFPS_2012}{article}{
     author={Botelho, G.},
     author={F\'{a}varo, V. V.},
     author={Pellegrino, D.},
     author={Seoane-Sep\'{u}lveda, J. B.},
     title={$L_{p}[0,1] \setminus \cup_{q>p} L_{q}[0,1]$ is spaceable for every $p>0$},
     journal={Linear Algebra Appl.},
     volume={436},
   date={2012},
   number={9},
   pages={2963--2965},
}

 \bib{CN_1997}{article}{
   author={Ciesielski, Krzysztof},
   author={Natkaniec, Tomasz},
   title={Algebraic properties of the class of Sierpi\'nski-Zygmund functions},
   journal={Topology Appl.},
   volume={79},
   date={1997},
   number={1},
   pages={75--99},
}

\bib{CN_1997_2}{article}{
   author={Ciesielski, Krzysztof},
   author={Natkaniec, Tomasz},
   title={On Sierpi\'nski-Zygmund bijections and their inverses},
   journal={Topology Proc.},
   volume={22},
   date={1997},
   number={Spring},
   pages={155--164},
}

\bib{EGS_TAMS}{article}{
   author={Enflo, Per H.},
   author={Gurariy, Vladimir I.},
   author={Seoane-Sep\'{u}lveda, J. B.},
   title={Some Results and Open Questions on Spaceability in Function Spaces},
   journal={Trans. Amer. Math. Soc.},
   status={in press},
}

\bib{EHM_1968}{article}{
   author={Erd{\H{o}}s, P.},
   author={Hajnal, A.},
   author={Milner, E. C.},
   title={On sets of almost disjoint subsets of a set},
   journal={Acta Math. Acad. Sci. Hungar},
   volume={19},
   date={1968},
   pages={209--218},
}

\bib{GMSS_2010}{article}{
    author={G\'{a}mez-Merino, J. L.},
    author={Mu\~{n}oz-Fern\'{a}ndez, G. A.},
    author={S\'{a}nchez, V. M.},
    author={Seoane-Sep\'{u}lveda, J. B.},
    title = {Sierpi\'nski-Zygmund functions and other problems on lineability},
    journal = {Proc. Amer. Math. Soc.},
    volume={138},
    date={2010},
    number={11},
    pages={3863--3876},
}

\bib{GMS_2010}{article}{
   author={G\'{a}mez-Merino, Jos\'{e} L.},
   author={Mu\~{n}oz-Fern\'{a}ndez, Gustavo A.},
   author={Seoane-Sep\'{u}lveda, Juan B.},
   title={Lineability and additivity in $\Bbb R^{\Bbb R}$},
   journal={J. Math. Anal. Appl.},
   volume={369},
   date={2010},
   number={1},
   pages={265--272},
}

\bib{GGMS_2010}{article}{
   author={Garc{\'{\i}}a, D.},
   author={Grecu, B. C.},
   author={Maestre, M.},
   author={Seoane-Sep\'{u}lveda, J. B.},
   title={Infinite dimensional Banach spaces of functions with nonlinear
   properties},
   journal={Math. Nachr.},
   volume={283},
   date={2010},
   number={5},
   pages={712--720},
}

\bib{K_2006}{book}{
   author={Kharazishvili, A. B.},
   title={Strange functions in real analysis},
   series={Pure and Applied Mathematics},
   volume={272},
   edition={2},
   publisher={Chapman \& Hall/CRC},
   place={Boca Raton, Florida},
   date={2006},
   pages={xii+415},
}

\bib{K_1983}{book}{
   author={Kunen, Kenneth},
   title={Set theory},
   series={Studies in Logic and the Foundations of Mathematics},
   volume={102},
   subtitle={An introduction to independence proofs}
   note={Reprint of the 1980 original},
   publisher={North-Holland Publishing Co.},
   place={Amsterdam},
   date={1983},
   pages={xvi+313},
}

\bib{P_2002}{article}{
   author={P{\l}otka, Krzysztof},
   title={Sum of Sierpi\'nski-Zygmund and Darboux like functions},
   journal={Topology Appl.},
   volume={122},
   date={2002},
   number={3},
   pages={547--564},
}

\bib{S_1973}{article}{
   author={Shinoda, Juichi},
   title={Some consequences of Martin's axiom and the negation of the continuum hypothesis},
   journal={Nagoya Math.~J.},
   volume={49},
   date={1973},
   pages={117--125},
}

\bib{S_PhD_thesis}{thesis}{
   author={Seoane-Sep\'{u}lveda, Juan B.},
   title={Chaos and lineability of pathological phenomena in analysis},
   type={Ph.D.\ Thesis}
   organization={Kent State University},
   note={ProQuest LLC, Ann Arbor, MI},
   date={2006},
   pages={139},
   isbn={978-0542-78798-0},
}

\bib{SZ_1923}{article}{
   author={Sierpi\'nski, W.},
   author={Zygmund, A.},
   title={Sur une fonction qui est discontinue sur tout ensemble de puissance du continu},
   journal={Fund. Math.},
   volume={4},
   date={1923},
   pages={316--318},
}

\end{biblist}
\end{bibdiv}

\end{document}